\newcommand{\abs}[1]{\left\vert#1\right\vert}
\newcommand{\norm}[1]{\left\lVert#1\right\rVert}
\newcommand{\E}{\mathbb{E}}
\newcommand{\G}{\mathbb{G}}
\newcommand{\R}{\mathbb{R}}
\newcommand{\cadlag}{c\`adl\`ag }
\newcommand{\cP}{\mathcal{P}}
\newtheorem{theorem}{Theorem}[section]
\newtheorem{lemma}[theorem]{Lemma}
\newtheorem{proposition}[theorem]{Proposition}
\theoremstyle{definition}
\newtheorem{remark}{Remark}
\newtheorem*{notation}{Notation}
\newtheorem{definition}{Definition}
\newtheorem{assump}{Assumption}
\begin{document}
\title[Nash equilibrium in games with mean-field interaction and controlled jumps]{$\varepsilon$-Nash equilibrium in stochastic differential games \\ with mean-field interaction and controlled jumps} 

\author{Chiara Benazzoli}
\address{Department of Mathematics, University of Trento, Italy}
\email{c.benazzoli@unitn.it}

\author{Luciano Campi}
\address{Department of Statistics, London School of Economics, UK}
\email{l.campi@lse.ac.uk}

\author{Luca Di Persio}
\address{Department of Computer Science, University of Verona, Italy}
\email{luca.dipersio@univr.it}

\date{\today}

\maketitle

\begin{abstract}
We consider a symmetric $n$-player nonzero-sum stochastic differential game with controlled jumps and mean-field type interaction among the players. Each player minimizes some expected cost by affecting the drift as well as the jump part of their own private state process. We consider the corresponding limiting mean-field game and, under the assumption that the latter admits a regular Markovian solution, we prove that an approximate Nash equilibrium for the $n$-player game can be constructed for $n$ large enough, and provide the rate of convergence. This extends to a class of games with controlled jumps classical results in mean-field game literature. This paper complements our previous work \cite{BCDP}, where in particular the existence of a mean-field game solution was investigated. \medskip\\
\emph{Keywords and phrases:} stochastic differential games, mean-field games, Nash equilibrium, Poisson process, jump measures.
\end{abstract}

\section{Introduction}
In this paper we consider a symmetric nonzero-sum stochastic differential game with controlled jumps, where the interaction among the players is of mean-field type. Our goal is to extend to this setting classical results on constructing approximate Nash equilibrium, for a large number of players, using the mean-field game approach. 

Mean-field games (MFGs, henceforth) are optimization problems that were simultaneously introduced by Lasry and Lions in \cite{lasry2006jeux1,lasry2006jeux2,lasry2007mean} and by Huang and co-authors in \cite{huang2006large}. They have to be understood as an approximation of large population symmetric stochastic differential games, whose players interact via the empirical law of their private states. According to such an approach, when the number of players, $n$, is large enough, a solution of the limit MFG can be used to provide a nearly Nash equilibria for the corresponding
$n$-player games; see, e.g., \cite{huang2006large}, \cite{kolokoltsov2011mean}, \cite{carmona2013mean}, \cite{carmona2013probabilistic}, \cite{carmona2015probabilistic}  as well as the recent book \cite{carmona2016lectures}. This approximation result is even more relevant as computing Nash equilibria in $n$-player games with $n$ very large is usually not feasible even numerically due to the curse of dimensionality. Moreover, MFGs represent a very flexible framework for applications in various areas including but not limited to finance, economics and crowd dynamics (see \cite{gueant2011mean,carmona2016lectures} for a good sample of applications), which partly explain the increasing literature on the subject.\medskip

The $n$-player game we consider can be shortly described as follows. Consider the following dynamics for the private state, $X^i$, of player $i=1,\ldots,n$:
\begin{equation}\label{X^{i,n}} dX^{i,n} _t =     b(t,X^{i,n} _t, \mu^n _t ) dt + \sigma(t,X^{i,n} _t) dW^i _t + \beta(\mu^n _{t-},\gamma^i_t) d\widetilde N^i _t,\quad t\in[0,T],\end{equation}
where $\gamma^i$ is the control of player $i$, $\mu^n _t = (1/n)\sum_{j=1}^n \delta_{X_t ^{j,n}}$ is the empirical distribution of the vector $(X_t ^{1,n},\ldots, X_t ^{n,n})$ of the private states of all players, $(W^1,\ldots,W^n)$ is an $n$-dimensional Brownian motion and $(\widetilde N^1,\ldots, \widetilde N^n)$ is an $n$-dimensional (compensated) jump process. The goal of each player $i$ is minimizing some objective functional, given by
\begin{equation}
 \E\left[ \int_0 ^T f(t,X^{i,n} _t, \mu^n _t ,\gamma^i _t) dt +g(X^{i,n} _T, \mu^n _T)\right],
\end{equation}
over her/his controls, for some running cost $f$ and some final cost $g$. In a nutshell we are dealing with a symmetric stochastic differential game, where the agents interact through the empirical distribution of their private states, entering in both the drift and the jump component. 

According to mean-field game theory, we expect that as the number of players gets larger and larger, the $n$-player game just described tends in some sense to the following MFG with controlled jumps, which has been the object of our previous paper  \cite{BCDP}, where we focused on the existence of solutions for the limit MFG and we gave sufficient conditions granting the existence of a Markovian solution. Moreover, in \cite{BCDP} we also provided an application to an illiquid interbank model where everything can be computed explicitly.

In the limiting MFG, $Y=Y^\gamma$ denotes a state variable following the dynamics
\begin{equation}\label{Xrelax} dY_t =  b(t,Y_t,\mu_t ) dt + \sigma(t,Y_t) dW_t +  \beta(\mu_{t-},\gamma_t ) d\widetilde N_t,\quad t\in[0,T],\end{equation}
where $(\mu_t )_{t\in [0,T]}$ is a deterministic right-continuous with left limit (henceforth c\`adl\`ag) measure flow, i.e. any $\mu_t$ belongs to the space of all probability measures on the real line $\mathcal{P}(\R)$ equipped with the topology of the weak convergence, $\gamma_t$ represents a control process, taking values in a fixed action space $A$, $W$ is a standard Brownian motion and $ N$ is some (compensated) jump process. Moreover, we assume that $W$ and $N$ are independent. The aim is to find a control $\hat \gamma$ solving the following minimization problem
\begin{equation}
\inf_{\gamma} \E\left[ \int_0 ^T f(t,Y_t,\mu_t ,\gamma_t) dt+g(Y_T,\mu_T )\right],
\end{equation}
over all control processes $\gamma$ as above and such that the so-called mean-field condition is fulfilled: the measure flow $\mu$ has to be equal to the law of the optimally controlled dynamic $\widehat Y = Y^{\hat \gamma}$ at each time, that is $\mu_t = \mathcal{L}(\widehat Y_t)$ %(or as \cadlag functions $m(t-)=\E[X_{t-}]$) 
for all $t\in [0,T]$.

Our main contribution is that any solution to the latter, provided it is Markovian and Lipschitz continuous in the state variable, provides a good approximation of some Nash equilibrium in the $n$-player game. More precisely, let $\hat \gamma(t,Y_{t-})$ be a Markovian MFG solution with $\hat \gamma (t,x)$ Lipschitz continuous in $x$, hence the strategy profile $(\hat \gamma (t,X^{1,n}_{t-}),\ldots, \hat \gamma (t,X^{n,n}_{t-}))$, for $t\in [0,T]$, is an $\varepsilon_n$-Nash equilibrium in the $n$-player game, where $\varepsilon_n \to 0$ as $n \to \infty$. This result extends to our jump setting classical results that have been proven for continuous paths state variables as in, e.g., \cite{carmona2013mean,carmona2013probabilistic,carmona2015probabilistic}. Notice that our model is one-dimensional only for the sake of simplicity, extending our results to a multi-dimensional state variable $X$ is straightforward. 
 
We conclude the introduction with a brief overview on MFG for jump processes. While the uncontrolled counter-part of MFG, that is particle systems and propagation of chaos for jump processes, has been thoroughly studied in the probabilistic literature (see, e.g., \cite{Gra,jourdain2007nonlinear} and the very recent preprint \cite{andreis2016MKVjumps}), MFGs with jumps have not attracted much attention so far. Indeed, most of the existing literature focuses on non-linear dynamics with continuous paths, with the exception of few papers such as \cite{gomes, hafayed2014mean, kolokoltsov2011mean}, and the more recent \cite{cecchin2017probabilistic}. The paper \cite{gomes} studies MFGs in continuous-time with finitely many states, while \cite{hafayed2014mean} deals with stochastic control of McKean-Vlasov type (see \cite{carmona2013control} for a comparison between MFG and McKean-Vlasov control), and \cite{kolokoltsov2011mean} uses methods based on potential theory and nonlinear Markov processes. The latter \cite{cecchin2017probabilistic} presents a new probabilistic approach based on jump processes used to get results on MFG with finitely many states. \medskip

The paper is organized as follows: in Section \ref{N-game} we describe the $n$-player game, the corresponding MFG and give all the relevant assumptions on the initial data and the coefficients. Section \ref{nash} contains the statement and the proof of the main theorem of this paper, establishing that under suitable assumptions a Markovian MFG solution can be used to construct an approximate Nash equilibrium for the $n$-player game with $n$ sufficiently large.

\section{A symmetric $n$-player game with interaction of mean-field type}\label{N-game}

 In this section we give a precise description of the $n$-player game we are interested in, together with the corresponding MFG. Moreover we set the main assumptions and prove some preliminary a-priori estimates on the state variables.

\subsection{The $n$-player game $G_n$ with mean-field interaction.}
Let $(\Omega, \mathcal{F}, (\mathcal{F}_t)_{t\in[0,T]}, P)$ be a filtered probability space satisfying the usual conditions and supporting $n$ independent Brownian motions $W^i$ and $n$ independent Poisson processes $N^i$ with a time-dependent intensity $\lambda(t)$, for $i=1,\dots,n$. Let $X^{i,n}$ be the unique strong solutions to the following SDEs 
\begin{equation}\label{eq:XiN}
\begin{aligned}
& dX^{i,n}_t(\gamma)=b(t,X^{i,n}_t,\mu^n_t) dt +\sigma(t,X^{i,n}_t) dW^i_t+\beta(\mu^n_{t-},\gamma^i_{t}) d\widetilde N^i_t ,\\
& X^{i,n}_0=\xi^i ,
\end{aligned}
\end{equation}
where the initial values $\xi^i$, $i=1,\dots, n$, are independent real-valued random variables, all distributed according to the same distribution $\chi$. Here, $\mu^n$ denotes the empirical distribution of the system $X^n=(X^{1,n},\dots, X^{n,n})$ at time $t$, which is defined as
\begin{equation}\label{def:empirical}
\mu^n_t=\mu^n_t(\gamma)=\frac{1}{n}\sum_{i=1}^n \delta_{X^{i,n}_t(\gamma)}\,,
\end{equation}
where $\delta_{\cdot}$ is the Dirac measure.

Each player $i$ chooses his/her control $\gamma^i$, also called strategy of Player $i$, which takes values in the control space $A$. $A$ is assumed to be a compact, convex set in $\R$. To be admissible, a control $\gamma^i$ has to be a predictable process. The set of all the admissible strategies will be denoted by $\G$ and ${A}_\infty$ will denote $\sup_{a\in A} \abs{a}$, which is finite by compactness of $A$.
An admissible strategy profile $\gamma$ for the game $G_n$, also called simply an admissible strategy, is an $n$-tuple $(\gamma^1,\dots,\gamma^n)$ of admissible controls $\gamma^i\in\G$ for all $i=1,\dots,n$, i.e. $\gamma\in\G^n$, where $\gamma^i$ represents the action chosen by Player $i$. Assumptions guaranteeing the existence of a unique strong solution to the SDEs above will be given later.

For Player $i$, the expected outcome of the game $G_n$ according to the strategy profile $\gamma=(\gamma^1,\dots,\gamma^n)$ is defined by
\begin{equation}\label{def:JiN}
J^{i,n}(\gamma)=\E\left[\int_0^T f(t,X^{i,n}_t(\gamma),\mu^n_t(\gamma),\gamma^i_t) dt+g(X^{i,n}_T(\gamma), \mu^n_T(\gamma))\right]\,.
\end{equation}
We assume that the functional $J^{i,n}$ represents a \emph{cost} for the agent and that all the players are rational. Therefore, the aim of each player in the game is to minimize $J^{i,n}(\gamma)$ over his/her admissible strategies $\G$.

We write $X^{i,n}(\gamma)$ and $J^{i,n}(\gamma)$ to stress that the dynamics of the state and the expected cost of game $G_n$ of Player $i$ depend not only on his/her control $\gamma^i$ but also on the decision rule of the other players. The interaction between the players is of mean-field type: the dynamics of each player private state and his/her costs depend on the other players' states only through their distribution. Therefore the cost functions and the private state dynamics for each player $i$ are invariant under a permutation of the other players' identities, and this provides symmetry to the game $G_n$.

\begin{notation}
Given an admissible  strategy profile $\gamma=(\gamma^1,\dots,\gamma^n)\in\G^n$ and an admissible strategy $\eta\in\G$, $(\eta,\gamma_{-i})$ denotes a further admissible strategy where Player $i$ deviates from $\gamma$ by playing $\eta$, whereas all the other players continue playing $\gamma^j$, $j\neq i$, i.e.
\[
(\eta,\gamma_{-i})=(\gamma^1,\dots,\gamma^{i-1},\eta,\gamma^{i+1},\dots,\gamma^n)\,.
\]
\end{notation}

Our aim is to find an approximate Nash equilibrium for the $n$-player game $G_n$.
\begin{definition}
For a given $\varepsilon\ge0$, an admissible strategy profile $\gamma=(\gamma^1,\dots,\gamma^n)\in \G^n$ is an  $\varepsilon$-Nash equilibrium of the $n$-player game $G_n$ if for each $i=1,\dots, n$ and for any admissible strategy $\eta\in \G$ the following inequality is satisfied
\begin{equation}\label{eq:eNe}
J^{i,n}(\eta, \gamma_{-i})\ge J^{i,n}(\gamma)-\varepsilon\,.
\end{equation}
A strategy $\gamma$ is a Nash equilibrium of the $n$-player game $G_n$ if it is a $0$-Nash equilibrium, that is an $\varepsilon$-Nash equilibrium with $\varepsilon=0$.
\end{definition}
In other words, a strategy profile $(\gamma^1,\dots,\gamma^n)$ is an $\varepsilon$-Nash equilibrium if for each player in the game an unilateral change of his/her strategy when the others remain unchanged provides a maximum saving of $\varepsilon$.

We now introduce a mean-field game, which represents the previous game $G_n$ when the number of players $n$ grows to infinity.\\

\subsection{The associated mean-field game $G_\infty$ and main assumptions.}
Let $(\Omega, \mathcal{F},(\mathcal{F}_t)_{t\in[0,T]}, P)$ be a filtered probability space satisfying the usual conditions and supporting a Brownian motion $W$ and a Poisson process $N$ with a time-dependent intensity function $\lambda(t)$. Let $Y$ be the unique strong solution to
\begin{equation}\label{eq:Y}
\begin{aligned}
& dY_t(\gamma)=b(t,Y_t,\mu_t) dt+\sigma(t,Y_t) dW_t+\beta(\mu_{t-},\gamma_{t}) d\widetilde N_t\\
& Y_0=\xi\sim\chi
\end{aligned}
\end{equation}
where $\mu$ is a \cadlag flow of probability measures, $\mu\colon[0,T]\to \cP(\R)$, with $\mu(0-)=\delta_0$. Assumptions granting the existence of a unique strong solution to the SDE above will be given shortly.
The expected outcome of the game for playing strategy $\gamma$ is defined by
\begin{equation}\label{def:Jinf}
J(\gamma)=\E\left[\int_0^T f(t,Y_t(\gamma),\mu_t,\gamma_t) dt+g(Y_T(\gamma), \mu_T)\right]\,.
\end{equation}
A mean-field game solution for $G_\infty$ is an admissible process $\hat \gamma\in\G$ which is optimal, i.e. $\hat\gamma\in\arg\min_{\gamma\in\G} J(\gamma)$, and satisfies the mean-field condition $\mu_t=\mathcal{L}(Y_t)$ for all $t\in[0,T]$. 
A mean-field solution $\hat\gamma$ of $G_\infty$ is said to be Markovian if $\hat\gamma_t=\hat\gamma(t,Y_{t-})$ where $\hat\gamma$
is a measurable function.\\

For the games to be well-defined and to find an approximate Nash equilibrium for the $n$ player game $G_n$ we have to require some integrability of the initial conditions of the state processes as well as some regularity on the functions
\begin{gather*}
b\colon[0,T]\times\R\times \cP(\R)\to\R , \quad \sigma\colon[0,T]\times\R\to\R ,\quad \beta\colon \cP(\R) \times A\to\R\,,\\
\lambda\colon[0,T]\to\R_+ , \quad f\colon[0,T]\times\R\times \cP(\R) \times A\to\R ,\quad g\colon\R\times  \cP(\R) \to\R\, ,
\end{gather*}
where $\R_+$ denotes the set of all positive real numbers.
\begin{assump}\label{main-ass}
\begin{enumerate}
\item \label{a:app:ic} The initial distribution $\chi$ belongs to $\cP^q(\R)$ for some $q>2$, $q \neq 4$.
\item  $b$ is a  Lipschitz function both in $x$ and $m$, $\sigma$ is Lipschitz in $x$, and $\lambda$ is Lipschitz in $\gamma$ and $\mu$. Namely,
there exist positive constants $L_b$, $L_\sigma$ and $L_\beta$  such that
\begin{gather*}
\abs{b(t,x,\mu)-b(t,y,\nu)}\le L_b\abs{x-y}+L_b d_{W,2}(\mu,\nu) \quad\forall x,y, \mu,\nu\in\cP^2(\R)\,,\,\forall t\in[0,T]\,,\\
\abs{\sigma(t,x)-\sigma(t,y)}\le L_\sigma\abs{x-y}\quad\forall x,y\in\R\,,\,\forall t\in[0,T]\,,\\
\abs{\beta(\mu,\gamma)-\beta(\nu,\eta)}\le L_\beta d_{W,2}(\mu,\nu) +L_\beta\abs{\gamma-\eta}\quad\forall \mu,\nu\in\cP^2(\R)\,, \forall\gamma,\eta\in A\,.
 \end{gather*} 
 Without loss of generality, we can assume $L_b=L_\sigma={L_\beta}=L$.\\
Moreover, $b$, $\sigma$, $\beta$ and $\lambda$ are bounded, i.e. there exists a positive constant $M$ satisfying
\[
\norm{b}_\infty+\norm{\sigma}_\infty+\norm{\beta}_\infty+\norm{\lambda}_\infty\le M\,.
\]
\item  $f$ and $g$ are Lipschitz functions in both $x$ and $m$, i.e.
there exist two positive constants $L_f$, $L_g$ such that
\begin{gather*}
\abs{f(t,x,\mu, \gamma)-f(t,y,\nu, \gamma)}\le L_f\abs{x-y}+L_f d_{W,2}(\mu,\nu) \quad\forall x,y,\mu,\nu\in\cP^2(\R)\,,\,\forall t\in[0,T]\,, \forall \gamma \in A\\
\abs{g(x,\mu)-g(y,\nu)}\le L_g\abs{x-y}+L_g d_{W,2}(\mu,\nu) \quad\forall x,y,\mu,\nu\in\cP^2(\R)\,.
 \end{gather*}
 As before, we can assume $L_f=L_g=L$.
\end{enumerate} 
\end{assump}
Here $d_{W,2}$ stands for the squared Wasserstein distance, while $\| \cdot \|_\infty$ denotes the sup-norm. From now on, to simplify the notation, we write $\cP(\R)$ for $\cP^2(\R)$ and $d_{W}$ for $d_{W,2}$.

Note that by standard results, Assumption~\ref{main-ass}(1-2) ensures that equations~\eqref{eq:XiN} and \eqref{eq:Y} admit a unique strong solution. The assumption $q \neq 4$ guarantees the applicability of \cite[Theorem 1]{fournier2015rate} to obtain the rate of convergence (see our Remark \ref{r:unifmY} for details). 

\subsection{$L^2$-estimates for the state processes and the empirical distribution process in $G_n$}

The two following lemmas provide estimates for the second moment of the process $X^n$ (as in \eqref{eq:XiN}) and for the corresponding empirical measure flow $\mu^n$. The proofs are rather standard, we include all the details for reader's convenience.

\begin{lemma}\label{lemma:supXiN}
Let Assumption \ref{main-ass} hold. Then, for each admissible strategy $\gamma\in\G^n$ the related controlled processes $X^{i,n}(\gamma)$ for $i=1,\dots,n$, solving \eqref{eq:XiN}, satisfy
\begin{equation}\label{def:tC}
\E \left[\sup_{t\in[0,T]} | X^{i,n}_t | ^2 \right] \le \hat C(\chi,T,\norm{b}_\infty,\norm{\sigma}_\infty,  \norm{\beta}_\infty ,\norm{\lambda}_\infty)\,,
\end{equation}
where the constant $\hat C$ is independent of  $n$ and $\gamma$.
\end{lemma}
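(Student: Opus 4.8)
The plan is to use a standard Gronwall-type argument applied to the running supremum of the state process. First I would fix an admissible strategy profile $\gamma \in \G^n$ and a player index $i$, and write the solution of \eqref{eq:XiN} in integral form,
\[
X^{i,n}_t = \xi^i + \int_0^t b(s,X^{i,n}_s,\mu^n_s)\,ds + \int_0^t \sigma(s,X^{i,n}_s)\,dW^i_s + \int_0^t \beta(\mu^n_{s-},\gamma^i_s)\,d\widetilde N^i_s.
\]
Then I would apply the elementary inequality $|a+b+c+d|^2 \le 4(|a|^2+|b|^2+|c|^2+|d|^2)$ and take the supremum over $t \in [0,u]$ for $u \le T$, followed by expectation. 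The initial term contributes $4\,\E[|\xi^i|^2] = 4\int x^2\,\chi(dx)$, which is finite by Assumption~\ref{main-ass}(1). The drift term is handled by Cauchy--Schwarz (or Jensen) in time together with the boundedness $\norm{b}_\infty \le M$, giving a bound of the form $4T^2\norm{b}_\infty^2$; alternatively one keeps it as $4T\int_0^u \E\sup_{r\le s}|X^{i,n}_r|^2\,ds$ if one wants to exploit Lipschitz continuity, but since $b$ is assumed bounded the cruder estimate suffices and keeps the constant independent of $\gamma$.

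For the two stochastic integrals I would invoke the Burkholder--Davis--Gundy inequality. For the Brownian part, $\E\sup_{t\le u}\big|\int_0^t \sigma\,dW^i_s\big|^2 \le C_{BDG}\,\E\int_0^u \sigma(s,X^{i,n}_s)^2\,ds \le C_{BDG}\,T\norm{\sigma}_\infty^2$. For the compensated Poisson part, the predictable quadratic variation of $\int_0^\cdot \beta(\mu^n_{s-},\gamma^i_s)\,d\widetilde N^i_s$ is $\int_0^\cdot \beta(\mu^n_{s-},\gamma^i_s)^2\lambda(s)\,ds$, so BDG (or Doob's $L^2$ inequality applied to this martingale) gives $\E\sup_{t\le u}\big|\int_0^t \beta\,d\widetilde N^i_s\big|^2 \le C\,\E\int_0^u \beta(\mu^n_{s-},\gamma^i_s)^2\lambda(s)\,ds \le C\,T\norm{\beta}_\infty^2\norm{\lambda}_\infty$. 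Collecting all four pieces yields, for every $u \in [0,T]$,
\[
\E\Big[\sup_{t\in[0,u]}|X^{i,n}_t|^2\Big] \le 4\int x^2\,\chi(dx) + C\big(T^2\norm{b}_\infty^2 + T\norm{\sigma}_\infty^2 + T\norm{\beta}_\infty^2\norm{\lambda}_\infty\big),
\]
a constant depending only on $\chi$, $T$ and the sup-norms of the coefficients, and crucially independent of $n$ and of the strategy $\gamma$. Taking $u = T$ gives \eqref{def:tC}.

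Since the coefficients are assumed bounded, no Gronwall iteration is actually needed and the estimate is essentially immediate; the only mild subtlety worth spelling out is the a-priori finiteness of $\E\sup_{t\le T}|X^{i,n}_t|^2$, which justifies manipulating it freely — this can be dispatched by a standard localization via stopping times $\tau_k = \inf\{t : |X^{i,n}_t| \ge k\}$, deriving the bound uniformly in $k$ on $[0,t\wedge\tau_k]$ and then letting $k \to \infty$ by monotone convergence. I do not expect any real obstacle here; the statement follows from routine stochastic-calculus estimates, and the whole point of assuming boundedness of $b,\sigma,\beta,\lambda$ in Assumption~\ref{main-ass}(2) is precisely to make the constant $\hat C$ independent of $n$ and $\gamma$, since the empirical measure $\mu^n$ enters the coefficients only through these bounded functions.
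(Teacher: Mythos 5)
Your argument is correct and follows essentially the same route as the paper: decompose the integral form of the SDE, bound the drift term via Jensen/Cauchy--Schwarz and the boundedness of $b$, and control the Brownian and compensated-Poisson integrals by Burkholder--Davis--Gundy using $\norm{\sigma}_\infty$, $\norm{\beta}_\infty$ and $\norm{\lambda}_\infty$, with no Gronwall step needed since all coefficients are bounded. The only addition is your localization remark on a-priori finiteness, which the paper omits but which does not change the substance of the proof.
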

\begin{proof}
There exists a constant $C$ such that
\begin{eqnarray*}
\abs{X^{i,n}_t}^2  & \le & C\abs{\xi_i}^2+C\abs{\int_0^t b(s,X^{i,n}_s,\mu^n_s) ds}^2 \\ && +C\abs{\int_0^t \sigma(s,X^{i,n}_s) dW^i_s}^2 +C\abs{\int_0^t \beta(\mu^n_{s-},\gamma^i_{s}) d\widetilde N^i_s}^2\,.
\end{eqnarray*}
Applying Jensen's and Burkholder-Davis-Gundy's inequalities, it follows that for a constant $C$ (which may change from line to line)
\begin{eqnarray*}
\E \left[\sup_{t\in[0,T]} \abs{X^{i,n}_t}^2 \right] &\le & C \E \left[\abs{\xi_i}^2 \right] + C t\E \left[\int_0^t \sup_{u\in[0,s]}\abs{b(s,X^{i,n}_s,\mu^n_s)}^2 ds \right]
\\&& + C  \E \left[ \int_0^t \sigma(s,X^{i,n}_s)^2 ds \right] + C \E \left[\int_0^t \beta (\mu^n_{s-},\gamma^i_{s})^2 \lambda(s) ds \right]\\
&\le & C(\chi) + C \norm{b}^2_\infty t^2 +C \norm{\sigma}_\infty^2 t + C \norm{\beta}^2 _\infty \norm{\lambda}_\infty t\\
&\le & \hat C(\chi,T,\norm{b}_\infty,\norm{\sigma}_\infty,\norm{\beta}_\infty,\norm{\lambda}_\infty),
\end{eqnarray*}
where we have used Assumption \ref{main-ass}(2) for the second inequality.
\end{proof}

\begin{lemma}\label{lemma:m2b}
Let Assumption \ref{main-ass} hold. Then, for each $n\ge 1$ and for each admissible strategy $\gamma\in\G^n$, there exists a constant $\hat C$ independent of $n$ and $\gamma$ such that
\begin{equation}
\label{eq:EsupmN2}
\E \left[ \sup_{t\in[0,T]} d_W(\mu^n_t,\delta_0)^2  \right] \le \hat C(\chi,T,\norm{b}_\infty,\norm{\sigma}_\infty,\norm{\beta}_\infty,\norm{\lambda}_\infty).
\end{equation}
\end{lemma}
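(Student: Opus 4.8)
The plan is to reduce the statement to Lemma~\ref{lemma:supXiN} by exploiting the explicit form of the squared $2$-Wasserstein distance from a probability measure to the Dirac mass at $0$. Since the only coupling between $\mu^n_t$ and $\delta_0$ is the one transporting all the mass onto $0$, one has
\[
d_W(\mu^n_t,\delta_0)^2 \;=\; \int_\R \abs{x}^2 \,\mu^n_t(dx) \;=\; \frac1n\sum_{i=1}^n \abs{X^{i,n}_t}^2 .
\]
This identity turns the left-hand side of \eqref{eq:EsupmN2} into the supremum of an average of the squared state processes.

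Next I would bound the supremum of the average by the average of the suprema, using $\sup_{t}\frac1n\sum_{i=1}^n a_i(t)\le \frac1n\sum_{i=1}^n \sup_t a_i(t)$ for nonnegative functions, to get
\[
\sup_{t\in[0,T]} d_W(\mu^n_t,\delta_0)^2 \;\le\; \frac1n\sum_{i=1}^n \sup_{t\in[0,T]} \abs{X^{i,n}_t}^2 .
\]
Taking expectations and applying Lemma~\ref{lemma:supXiN} to each term then gives
\[
\E\!\left[\sup_{t\in[0,T]} d_W(\mu^n_t,\delta_0)^2\right] \;\le\; \frac1n\sum_{i=1}^n \E\!\left[\sup_{t\in[0,T]} \abs{X^{i,n}_t}^2\right] \;\le\; \hat C(\chi,T,\norm{b}_\infty,\norm{\sigma}_\infty,\norm{\beta}_\infty,\norm{\lambda}_\infty),
\]
where the last inequality holds because the constant $\hat C$ provided by Lemma~\ref{lemma:supXiN} is the same for every $i$ and is independent of $n$ and $\gamma$; hence so is the resulting bound.

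There is essentially no obstacle in this argument: it is entirely routine once the Wasserstein-to-Dirac identity is in hand, and the only point deserving a line of justification is that identity itself, which follows immediately from the definition of $d_W$ since the coupling of any $\mu$ with $\delta_0$ is unique. The uniformity of the final constant in $n$ and $\gamma$ is inherited directly from the corresponding uniformity in Lemma~\ref{lemma:supXiN}.
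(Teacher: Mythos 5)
Your proposal is correct and follows essentially the same route as the paper: bound $d_W(\mu^n_t,\delta_0)^2$ by the empirical second moment $\frac1n\sum_i\abs{X^{i,n}_t}^2$ (in fact equality holds, as you note), exchange the supremum with the average, and invoke Lemma~\ref{lemma:supXiN} with its constant uniform in $i$, $n$ and $\gamma$.
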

\begin{proof}
We show that the constant $\hat C$ given in Lemma~\ref{lemma:supXiN} provides the required bound.
Indeed,
\begin{align*}
\E \left[ \sup_{t\in[0,T]} d_W(\mu^n_t,\delta_0)^2 \right] &\le \E \left[ \sup_{t\in[0,T]} \frac{1}{n}\sum_{i=1}^n \abs{X^{i,n}_t}^2  \right] \\
&\le  \frac{1}{n}\sum_{i=1}^n \E \left[ \sup_{t\in[0,T]}\abs{X^{i,n}_t}^2  \right] \le \hat C\,.\qedhere
\end{align*}
\end{proof}

\section{Markovian $\varepsilon$-Nash equilibrium}\label{nash}
This section presents the main result of this paper. Namely, we show how to construct approximate equilibria for the $n$-player game $G_n$ under the assumption that the mean-field game $G_\infty$ admits a Markovian solution. Therefore, consider the case when $\hat\gamma=\hat\gamma(t,Y_{t-})$ is a Markovian mean-field game solutions of $G_\infty$. Sufficient conditions ensuring the existence of Markovian mean-field game solution, together with an application to an illiquid interbank model, is given in our previous paper \cite{BCDP}.

Consider the game $G_n$, where each agent $i$ plays strategy $\hat\gamma=\hat\gamma(t,\hat X^{i,n}_{t-})$, i.e. each player follows the optimal strategy function $(t,x)\mapsto\hat\gamma(t,x)$ evaluated at the left-limit of his/her own state process $\hat X^{i,n}_{t-}$. 
The $n$-tuple $\hat X^n=(\hat X^{1,n},\dots,\hat X^{n,n})$ is defined as solution the following system
\begin{equation}\label{def:XiN}
\begin{aligned}
&d\hat X^{i,n}_t=b(t,\hat X^{i,n}_t,\mu^n_t) dt+\sigma(t,\hat X^{i,n}_t) dW^i_t +\beta(\mu
^n_{t-},\hat \gamma(t,\hat X_{t-}^{i,n})) d\widetilde N^i_t ,\\ &\hat X^{i,n}_0=\xi^i\,.
\end{aligned}
\end{equation}

For each player the strategy $\hat\gamma(t,\hat X^{i,n}_{t-})$ is admissible, i.e $(\hat\gamma(t,\hat X^{i,n}_{t-}))_{t\in[0,T]}\in\G$, being $\hat \gamma$ a (Borel)-measurable function by construction and $X^{i,n}_{t-}$ a predictable process as solution of the stochastic differential equation \eqref{def:XiN}.

All the results of this section are proved under the following standing assumption on the limiting mean-field game $G_\infty$:

\begin{assump}\label{markov-lip}
Assume that there exists a Markovian mean-field game solution $\hat\gamma_t =\hat\gamma(t,Y_{t-})$ for the game $G_\infty$, for some measurable function $\hat \gamma : [0,T]\times \mathbb R \to A$.
Moreover, the function $\hat\gamma (t,x)$ is Lipschitz continuous in the state variable $x$, i.e.
\begin{equation}\label{def:gL}
\abs{\hat \gamma(t,x)-\hat \gamma(t,y)}\le C_{\hat\gamma} \abs{x-y}\quad \forall t\in[0,T],\, \forall x,y\in\R\,,
\end{equation}
for some constant $C_{\hat\gamma}>0$.
\end{assump}

\begin{theorem}\label{thm:eNeM} 
Let Assumptions \ref{main-ass} and \ref{markov-lip} be fulfilled. If $\hat X^n$ is the solution of the system~\eqref{def:XiN}, the $n$-tuple $(\hat \gamma(t,\hat X^{1,n}_{t-}),\dots,\hat \gamma(t,\hat X^{n,n}_{t-}))$ is an $\varepsilon_n$-Nash equilibrium for the $n$-player game $G_n$, with $\varepsilon_n=O\left(n^{-\alpha/2}\right)\to 0$ as $n\to\infty$, where $\alpha=\min\left\{\frac{1}{2},\frac{q-2}{2}\right\}$.
\end{theorem}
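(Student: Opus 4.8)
The strategy is the classical one from mean-field game theory (as in \cite{carmona2013probabilistic,carmona2015probabilistic}), adapted to the present jump setting. The central idea is a coupling argument. One introduces, on the same probability space as the $n$-player system $\hat X^n$ from \eqref{def:XiN}, the $n$ i.i.d.\ copies $\hat Y^i$ of the optimally controlled McKean--Vlasov dynamics $\widehat Y$ of the limit game $G_\infty$, driven by the \emph{same} Brownian motions $W^i$, the \emph{same} Poisson processes $N^i$ and the \emph{same} initial conditions $\xi^i$; that is, $d\hat Y^i_t = b(t,\hat Y^i_t,\mu_t)\,dt + \sigma(t,\hat Y^i_t)\,dW^i_t + \beta(\mu_{t-},\hat\gamma(t,\hat Y^i_{t-}))\,d\widetilde N^i_t$, where $\mu_t=\mathcal L(\widehat Y_t)$. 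The first step is to estimate $\E[\sup_{t\le T}|\hat X^{i,n}_t-\hat Y^i_t|^2]$. Writing the difference, using the Lipschitz continuity of $b,\sigma,\beta$ and of $\hat\gamma$ (Assumptions \ref{main-ass} and \ref{markov-lip}), the boundedness of $\lambda$, together with Burkholder--Davis--Gundy and Jensen's inequalities exactly as in the proof of Lemma~\ref{lemma:supXiN}, one is left controlling the extra term $d_W(\mu^n_t,\mu_t)$ coming from the mean-field interaction. Bounding $d_W(\mu^n_t,\mu_t) \le d_W(\mu^n_t,\bar\mu^n_t) + d_W(\bar\mu^n_t,\mu_t)$, where $\bar\mu^n_t = \frac1n\sum_j \delta_{\hat Y^j_t}$ is the empirical measure of the i.i.d.\ limit copies, the first piece is again dominated by $\frac1n\sum_j|\hat X^{j,n}_t-\hat Y^j_t|^2$ and fed back into a Gronwall argument, while the second piece is the empirical-measure-to-law distance for $n$ i.i.d.\ samples, which by \cite[Theorem~1]{fournier2015rate} is $O(n^{-\alpha})$ in squared expectation, where $\alpha=\min\{1/2,(q-2)/2\}$; here the moment assumption $q>2$, $q\neq 4$ from Assumption~\ref{main-ass}(1) is exactly what is needed (this is the content of the promised Remark~\ref{r:unifmY}). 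Gronwall then yields
\[
\E\Big[\sup_{t\in[0,T]}|\hat X^{i,n}_t-\hat Y^i_t|^2\Big] \le C\, n^{-\alpha}.
\]

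\textbf{From the coupling estimate to the $\varepsilon_n$-Nash property.} Fix a player $i$ and an arbitrary admissible deviation $\eta\in\G$. I must show $J^{i,n}(\eta,\hat\gamma_{-i}) \ge J^{i,n}(\hat\gamma_{-i},\hat\gamma^i) - \varepsilon_n$ with $\hat\gamma^j := \hat\gamma(t,\hat X^{j,n}_{t-})$. The argument has three links. First, by the coupling estimate above and the Lipschitz continuity of $f$ and $g$ in $(x,m)$ (Assumption~\ref{main-ass}(3)), together with Lemma~\ref{lemma:m2b} controlling the moments of the empirical measures, one shows $|J^{i,n}(\hat\gamma) - J(\hat\gamma)| \le C\,n^{-\alpha/2}$: the cost of player $i$ in the $n$-player game when everyone plays the Markovian MFG strategy is close to the MFG cost $J(\hat\gamma)$. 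Second, and this is where one must be slightly careful, for the deviation: when player $i$ plays $\eta$ while the others keep $\hat\gamma^j$, the other players' states are perturbed too (since $\mu^n$ enters everyone's dynamics and costs), so one introduces the perturbed system $\widetilde X^n$ where player $i$ uses $\eta$ and players $j\neq i$ use $\hat\gamma(t,\widetilde X^{j,n}_{t-})$, compares it to the system where players $j\neq i$ are replaced by their i.i.d.\ limit copies $\hat Y^j$ and player $i$ faces the frozen flow $\mu$, and runs the same Gronwall-plus-Fournier--Guillin estimate to get $\E[\sup_t |\widetilde X^{i,n}_t - Y^{i,\eta}_t|^2] \le C n^{-\alpha}$, where $Y^{i,\eta}$ solves \eqref{eq:Y} with control $\eta$ against the frozen $\mu$; hence $|J^{i,n}(\eta,\hat\gamma_{-i}) - J(\eta)| \le C n^{-\alpha/2}$ where $J(\eta)$ is the MFG cost of the deviation $\eta$. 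Third, optimality of $\hat\gamma$ in the limit MFG gives $J(\eta) \ge J(\hat\gamma)$. Chaining the three:
\[
J^{i,n}(\eta,\hat\gamma_{-i}) \ge J(\eta) - C n^{-\alpha/2} \ge J(\hat\gamma) - C n^{-\alpha/2} \ge J^{i,n}(\hat\gamma) - C' n^{-\alpha/2},
\]
so $\varepsilon_n = O(n^{-\alpha/2})$, as claimed. Symmetry of the game means the constants do not depend on $i$.

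\textbf{Main obstacle.} The routine part is the Gronwall machinery; the part requiring genuine care is the bookkeeping in the deviation step — ensuring that replacing the $n-1$ non-deviating players by independent limit copies is legitimate uniformly in the (arbitrary, merely predictable) deviation $\eta$, and that the moment bounds (Lemmas~\ref{lemma:supXiN} and \ref{lemma:m2b}) hold uniformly over all admissible strategy profiles including such deviations — which they do, since those lemmas were stated with constants independent of $\gamma$. A secondary technical point is to make sure the jump term $\beta(\mu^n_{t-},\cdot)\,d\widetilde N^i_t$, which is a compensated integral against a finite-intensity Poisson process, is handled correctly in BDG: its predictable quadratic variation contributes $\int_0^t \beta(\mu^n_{s-},\cdot)^2\lambda(s)\,ds$, exactly as already used in Lemma~\ref{lemma:supXiN}, so the Lipschitz bound on $\beta$ in the measure argument combined with boundedness of $\lambda$ closes the estimate. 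Finally one should invoke \cite[Theorem~1]{fournier2015rate} with the correct exponent: in dimension one, for $q>2$ moments, the squared $2$-Wasserstein rate is $n^{-1/2}$ if $q>4$ and $n^{-(q-2)/2}$ if $2<q<4$, i.e.\ $n^{-\alpha}$ with $\alpha=\min\{1/2,(q-2)/2\}$, and the exclusion $q\neq 4$ avoids the logarithmic correction at the boundary.
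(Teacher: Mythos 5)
Your proposal is correct and follows essentially the same route as the paper: coupling the $n$-player system with i.i.d.\ copies of the optimally controlled limit dynamics driven by the same noise and initial data, a Gronwall-plus-Fournier--Guillin estimate for the state and empirical-measure discrepancies that is uniform in the deviating player's strategy (the paper's Propositions \ref{prop:mNmh} and \ref{prop:mtmN}), and the chain $J^{1,n}(\eta^{\hat\gamma})\approx\widetilde J(\eta)\ge\widetilde J(\hat\gamma)\approx J^{1,n}(\hat\gamma^{\hat X^n})$ (the paper's Propositions \ref{prop:app1} and \ref{prop:app2} together with \eqref{eq:opt}). One small caveat: for $d=1$ and $2<q<4$ the Fournier--Guillin rate for the squared $2$-Wasserstein distance is $n^{-(q-2)/q}$, not $n^{-(q-2)/2}$ as you quote, so the exponent actually delivered by this argument is $\alpha=\min\left\{\frac{1}{2},\frac{q-2}{q}\right\}$, exactly as in the paper's own Remark \ref{r:unifmY}; the $(q-2)/2$ in the theorem statement appears to be a typo, and this discrepancy does not affect the structure of your argument.
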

Without loss of generality we can assume that $C_{\hat\gamma}=L$ as in Assumption~\ref{main-ass}.
\begin{notation}
From now on, the strategy profile $(\hat \gamma(t,\hat X^{1,n}_{t-}),\dots,\hat \gamma(t,\hat X^{n,n}_{t-}))$, $t \in [0,T]$, will be shortly denoted by $\hat\gamma^{\hat X^n}$. Observe that $\hat X$ is the solution of \eqref{eq:XiN} under such a strategy $\hat\gamma^{\hat X^n}$.
\end{notation}

It is worth noting that in the game $G_n$ all the players are symmetric in their behaviour. Indeed, all the SDEs defining their states (cf. equation~\eqref{eq:XiN}) and their payoff functions (cf. equation~\eqref{def:JiN}) have  the same form. For this reason in the following we will prove the main result Theorem \ref{thm:eNeM} considering without loss of generality deviations of Player $1$ only. Indeed the same arguments would apply to every other player in the game.

In the proof of Theorem~\ref{thm:eNeM}, we will focus on two different scenarios: the case when all the players choose to play according to the optimal recipe suggested by $G_\infty$, i.e. they all play $\hat \gamma(t,\hat X^{i,n}_{t-})$ as explained above, and the case when Player 1 deviates by choosing a different strategy $\eta\in\G$, i.e. $$(\eta ,\hat\gamma^{\hat X^n}_{-1})=((\eta_t , \hat\gamma(t,\hat X^{2,n}_{t-}),\dots,\hat\gamma(t,\hat X^{n,n}_{t-})))_{t \in [0,T]}\,.$$ 
\begin{notation}
In what follows, strategy $(\eta,\hat\gamma^{X^n}_{-1})$ will be simply denoted by $\eta^{\hat \gamma}$ and the solution of \eqref{eq:XiN} under this strategy will be denoted by $\widetilde X^{i,n}$.
\end{notation}

In the following we will also make use of the processes $Y^{i,n}$ ($i=1,\dots,n$) and $\widetilde Y^{1,n}$, given as solutions of
\begin{equation}\label{def:Y}
dY^{i,n}_t(\hat \gamma)=b(t,Y^{i,n}_t,\hat \mu_t) dt+\sigma(t,Y^{i,n}_t) dW^i_t+\beta(\hat \mu_{t-},\hat \gamma(t,Y^{i,n}_{t-})) d\widetilde N^i_t\,,\quad Y^{i,n}_0=\xi^i
\end{equation}
and of
\begin{equation}\label{def:tY}
d\widetilde Y^{1,n}_t(\eta)=b(t,\widetilde Y^{1,n}_t,\hat \mu_t) dt+\sigma(t,\widetilde Y^{1,n}_t) dW^1_t+\beta(\hat \mu_{t-},\eta_{t}) d\widetilde N^1_t\,,\quad \widetilde Y^{1,n}_0=\xi^1
\end{equation}
respectively, where $\hat \mu_t$ is the law of the state process of the limiting game $G_\infty$ under the Markovian MFG solution $\hat\gamma$. Note that since for each $i$, the process $Y^{i,n}$ is defined as the dynamics of a representative player in $G_\infty$, given in equation \eqref{eq:Y}, and $\hat\gamma$ is by construction a Markovian MFG solution, then $\mathcal{L}(Y^{i,n}_t)=\hat \mu_t$.

\begin{remark}
The definition of processes $Y^{1,n}$ and $\widetilde Y^{1,n}$ differs from the one of $\hat X^{i,n}$ and $\widetilde X^{i,n}$ due to the different \emph{measure} considered in the stochastic differential equations. Indeed in \eqref{eq:XiN}, the dynamics of $X^{i,n}$ is computed taking into account the associated empirical distribution of the system $X^n$, i.e. $\mu^n$ as defined in equation \eqref{def:empirical}, while the dynamics of $Y^{i,n}$ and $\widetilde Y^{1,n}$ in \eqref{def:Y} and \eqref{def:tY} are computed with respect to $\hat \mu$. This implies that $Y^{i,n}$ and $\widetilde Y^{1,n}$ do no longer depend on the other players' choices (and in the following we will say that they do not depend on $n$, for short). \end{remark} 

\begin{remark}\label{r:unifmY}
Consider the empirical distribution of the system $Y^n=(Y^{1,n},\dots,Y^{n,n})$ at time $t$, namely
\[
\mu^{Y,n}_t=\mu^{Y,n}_t(\hat\gamma)=\frac{1}{n}\sum_{i=1}^n \delta_{Y^{i,n}_t}\,.
\]
As first step we show that, in expectation, $\mu^{Y,n}$ converges towards $\hat \mu$ with respect to $d_W$ as $n\to\infty$, namely
\begin{equation}\label{eq:mYc}
\lim_{n\to\infty}\E\left[d_W(\hat\mu_t,\mu^{Y,n}_t)^2 \right] = 0 \,.
\end{equation}
Being $Y_t^{i,n}$ independent and identically distributed random variables with distribution $\hat \mu_t$, \cite[Theorem 1]{fournier2015rate} ensures that
\[
\E\left[d_W(\hat \mu_t,\hat \mu^{Y,n}_t)^2 \right]\le C(q) M_q^\frac{2}{q}(\hat\mu)\left(\frac{1}{\sqrt{n}}+\frac{1}{n^\frac{q-2}{q}}\right)
\]
where $C$ is a positive constant depending on $q$, and $M_q$ is defined as $M_q(\mu)=\int_\R \abs{x}^q\mu(dx)$.

Arguing as in the previous Lemma~\ref{lemma:supXiN} but exploiting the stronger hypothesis on the initial distribution $\chi\in\cP^q(\R)$ with $q>2$, one can prove that $Y$, solution to the SDE~\eqref{def:Y}, satisfies
\[
\E\left[\sup_{t\in[0,T]}\abs{Y_t}^q\right]\le \hat  C_2(\chi,T,\norm{b}_\infty,\norm{\sigma}_\infty,  \norm{\beta}_\infty,\norm{\lambda}_\infty)\,.
\]
This implies that $M_q(\hat \mu)$ is finite, and therefore
\begin{equation}\label{eq:O}
d_W(\hat\mu_t,\mu^{Y,n}_t)^2 = O \left( n^{-\alpha} \right)
\end{equation}
where $\alpha=\min \left\{\frac{1}{2},\frac{q-2}{q}\right\}$ and being $q>2$, it holds that
\begin{equation}\label{eq:mYc}
\lim_{n\to\infty}\E\left[d_W(\hat\mu_t,\mu^{Y,n}_t)^2 \right] = 0 
\end{equation}
uniformly in time.
\end{remark}

Now, we want to show that the process $Y^{i,n}(\hat\gamma)$  \emph{approximates} $\hat X^{i,n}$ as $n$ grows to infinity, in a sense that will be specified later. Note that being independent of $n$, the dynamics of $Y^{i,n}(\hat\gamma)$ is easier to study. In both the systems $\hat X^n$ and $Y^n$, all $n$ players choose the same strategy, or more precisely the same strategy form, i.e. $\hat\gamma(t,\hat X^{i,n}_{t-})$ and $\hat\gamma(t,Y^{i,n}_{t-})$. We stress once more that the dynamics in $\hat X$ depend on the actual law of the system, while the evolution of the state processes $Y^{i,n}$ depends on $\hat \mu$.

\begin{proposition}\label{prop:mNmh}
Let $\hat X^{i,n}$ and $Y^{i,n}$ be defined as in equation \eqref{def:XiN} and \eqref{def:Y}, respectively. Then we have
\begin{eqnarray}\label{eq:mX-mY=0}
\sup_{t\in[0,T]} \E\left[d_W (\mu^n_t ,  \hat \mu_t)^2 \right] =O \left( n^{-\alpha} \right)\,,\\
\label{eq:X-Y=0}
  \sup_{t\in[0,T]}  \E \left[ \abs{X^{i,n}_t-Y^{i,n}_t}^2 \right]= O \left( n^{-\alpha} \right)\,.
\end{eqnarray}
\end{proposition}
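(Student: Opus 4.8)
The plan is to prove the two estimates \eqref{eq:mX-mY=0} and \eqref{eq:X-Y=0} simultaneously by a coupling-and-Gr\"onwall argument, since the two quantities are intertwined: the bound on the state difference $|X^{i,n}_t - Y^{i,n}_t|$ feeds into the bound on $d_W(\mu^n_t,\hat\mu_t)$ and vice versa. First I would fix $i$ (by symmetry, say $i=1$) and write the difference of the two SDEs \eqref{def:XiN} and \eqref{def:Y}, driven by the \emph{same} Brownian motion $W^i$ and the \emph{same} compensated Poisson process $\widetilde N^i$. Applying the elementary inequality $|a+b+c|^2 \le 3(|a|^2+|b|^2+|c|^2)$, taking expectations and using the It\^o isometry for the Brownian integral and the compensation formula for the jump integral (the cross terms vanish), I get
\[
\E\left[\abs{X^{i,n}_t - Y^{i,n}_t}^2\right] \le C\,\E\!\int_0^t \Big( \abs{b(s,X^{i,n}_s,\mu^n_s)-b(s,Y^{i,n}_s,\hat\mu_s)}^2 + \abs{\sigma(s,X^{i,n}_s)-\sigma(s,Y^{i,n}_s)}^2 + \lambda(s)\abs{\beta(\mu^n_{s-},\hat\gamma(s,X^{i,n}_{s-}))-\beta(\hat\mu_{s-},\hat\gamma(s,Y^{i,n}_{s-}))}^2 \Big) ds.
\]
Using the Lipschitz bounds from Assumption \ref{main-ass}(2), the boundedness of $\lambda$, and the Lipschitz property of $\hat\gamma$ from Assumption \ref{markov-lip}, every term on the right is controlled by $\abs{X^{i,n}_s - Y^{i,n}_s}^2$ and $d_W(\mu^n_{s},\hat\mu_{s})^2$ (up to constants), so that
\[
\E\left[\abs{X^{i,n}_t - Y^{i,n}_t}^2\right] \le C \int_0^t \Big( \E\big[\abs{X^{i,n}_s - Y^{i,n}_s}^2\big] + \E\big[ d_W(\mu^n_s,\hat\mu_s)^2 \big] \Big) ds.
\]

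Next I would estimate $\E[d_W(\mu^n_s,\hat\mu_s)^2]$. The natural decomposition is via the triangle inequality for $d_W$: $d_W(\mu^n_s,\hat\mu_s)^2 \le 2\,d_W(\mu^n_s,\mu^{Y,n}_s)^2 + 2\,d_W(\mu^{Y,n}_s,\hat\mu_s)^2$. The second term is exactly what Remark \ref{r:unifmY} controls, giving $\E[d_W(\mu^{Y,n}_s,\hat\mu_s)^2] = O(n^{-\alpha})$ uniformly in $s$. For the first term I would use the obvious coupling between $\mu^n_s$ and $\mu^{Y,n}_s$ through the shared index $i$: since both are empirical measures over the same $n$ labels, plugging the identity map coupling into the definition of the Wasserstein distance yields
\[
d_W(\mu^n_s,\mu^{Y,n}_s)^2 \le \frac{1}{n}\sum_{i=1}^n \abs{X^{i,n}_s - Y^{i,n}_s}^2,
\]
so that, by exchangeability of the labels, $\E[d_W(\mu^n_s,\mu^{Y,n}_s)^2] \le \E[\abs{X^{1,n}_s-Y^{1,n}_s}^2]$. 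Combining, I obtain $\E[d_W(\mu^n_s,\hat\mu_s)^2] \le C\,\E[\abs{X^{1,n}_s-Y^{1,n}_s}^2] + O(n^{-\alpha})$.

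Finally I substitute this back into the first displayed inequality. Setting $\varphi(t) := \E[\abs{X^{1,n}_t - Y^{1,n}_t}^2]$ (finite and bounded uniformly by Lemma \ref{lemma:supXiN} and the analogous bound for $Y^{i,n}$), I arrive at $\varphi(t) \le C\int_0^t \varphi(s)\,ds + C\,T\,O(n^{-\alpha})$, and Gr\"onwall's lemma gives $\varphi(t) \le C\,T\,e^{CT}\,O(n^{-\alpha}) = O(n^{-\alpha})$ uniformly in $t\in[0,T]$, which is \eqref{eq:X-Y=0}. Feeding this back into the Wasserstein estimate yields $\sup_{t\in[0,T]}\E[d_W(\mu^n_t,\hat\mu_t)^2] = O(n^{-\alpha})$, which is \eqref{eq:mX-mY=0}. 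The main obstacle, and the point requiring the most care, is handling the jump term: one must make sure that $\widetilde N^i$ is used consistently as the \emph{same} driving noise in both equations so that the difference really is a stochastic integral against $\widetilde N^i$ with a small integrand, then control its $L^2$-norm via the compensator $\lambda(s)\,ds$ rather than naively, and check that the Lipschitz-in-$x$ property of $\hat\gamma$ (Assumption \ref{markov-lip}) is exactly what is needed to close the loop on the $\beta(\cdot,\hat\gamma(s,\cdot))$ term. A secondary subtlety is keeping all constants independent of $n$, which follows because the Lipschitz and boundedness constants do not depend on $n$ and the only $n$-dependence enters through the $O(n^{-\alpha})$ term coming from Remark \ref{r:unifmY}.
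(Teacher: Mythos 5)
Your proposal is correct and follows essentially the same route as the paper's proof: the difference of the two SDEs controlled via It\^o isometry and the compensator, the Lipschitz bounds from Assumptions \ref{main-ass} and \ref{markov-lip}, the splitting $d_W(\mu^n_s,\hat\mu_s)^2 \le 2\,d_W(\mu^n_s,\mu^{Y,n}_s)^2 + 2\,d_W(\mu^{Y,n}_s,\hat\mu_s)^2$ with the identity coupling and Remark \ref{r:unifmY}, and a Gr\"onwall closure. The only (immaterial) difference is that you invoke exchangeability to run Gr\"onwall on the single quantity $\E[|X^{1,n}_t-Y^{1,n}_t|^2]$, whereas the paper applies it to the combined quantity $\E[d_W(\mu^n_t,\hat\mu_t)^2]+\frac{1}{n}\sum_{i}\E[|\hat X^{i,n}_t-Y^{i,n}_t|^2]$.
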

\begin{proof}
For each $t\in[0,T]$
\begin{multline*}
\abs{\hat X^{i,n}_t-Y^{i,n}_t}^2\le 3\left( \int_0^t \left(b(s,\hat X^{i,n}_s,\mu^n_s)-b(s,Y^{i,n}_s,\hat \mu_s)\right) ds\right)^2\\
+ 3\left(\int_0^t \left(\sigma(s,\hat X^{i,n}_s)-\sigma(s,Y^{i,n}_s)\right) dW^i_s\right)^2\\
+ 3\left(\int_0^t  \left( \beta(\mu_{s-}^n,\hat\gamma(s,\hat X^{i,n}_{s-}))-\beta(\hat \mu_{s-},\hat\gamma(s,Y^{i,n}_{s-}))\right) d\widetilde N^i_s\right)^2 \,.
\end{multline*}
Then, 
\begin{eqnarray*}
\E  \left[ \abs{\hat X^{i,n}_t-Y^{i,n}_t}^2\right] 
&\le & 3 t \E\left[  \int_0^t \abs{b(s,\hat X^{i,n}_s ,\mu^n_s)-b(s,Y^{i,n}_s,\hat \mu_s)}^2 ds\right]\\
&& + 3 \E\left[\int_0^t \abs{\sigma (s,\hat X^{i,n}_s) -\sigma(s,Y^{i,n}_s)}^2 ds\right]\\
&& + 3 \E\left[ \int_0^t \left( \beta(\mu_{s}^n,\hat\gamma(s,\hat X^{i,n}_{s}))-\beta(\hat \mu_{s},\hat\gamma(s,Y^{i,n}_{s}))\right)^2 \lambda(s)\, ds \right]\,.
\end{eqnarray*}
Using the Lipschitz continuity of functions $b$, $\sigma$ and $\beta$, given by Assumption \ref{main-ass}(2), and of $\hat\gamma(\cdot, x)$, as explained in equation \eqref{def:gL}, as well as the finiteness of $\E[\sup_{t\in [0,T]} d_W(\mu^n_t,\delta_0)^2 ]$, as in~\eqref{eq:EsupmN2}, we obtain
\begin{eqnarray}\label{eq:X-Y}
\E  \left[ \abs{\hat X^{i,n}_t-Y^{i,n}_t}^2\right] &\le &  6 t L^2 \int_0^t \left( \E  \left[\abs{\hat X^{i,n}_s -Y^{i,n}_s}^2\right]\ + \E \left[d_W(\mu^n_s,\hat\mu_s)^2 \right]\right) ds \nonumber  \\
&& + 3 L^2 \int_0^t  \E \left[d_W(\mu^n_s,\hat\mu_s)^2 \right] ds \nonumber \\
&& + 6  L^2\norm{\lambda}_\infty  \int_0^t  \left( \E \left[d_W(\mu^n_s,\hat\mu_s)^2 \right] + \E\left[\abs{\hat\gamma(s,\hat X^{i,n}_s)-\hat\gamma(s,Y^{i,n}_s)}^2\right] \right) ds \nonumber \\
&\le &  \widetilde C(T,L,M) \int_0^t \left( \E \left[ \abs{\hat X^{i,n}_s-Y^{i,n}_s}^2 \right] +  \E \left[d_W(\mu^n_s,\hat\mu_s)^2 \right] \right) ds ,
\end{eqnarray}
for a suitable constant $\widetilde C$. Moreover, by previous inequality \eqref{eq:X-Y}, we get
\begin{align*}
\E\left[d_W(\mu^n_t, \mu^{Y,n}_t)^2 \right]&\le   \frac{1}{n}\sum_{i=1}^n \E\left[ \abs{\hat X_t^{i,n}-Y^{i,n}_t}^2 \right]\\
&\le  \frac{\widetilde C }{n} \sum_{i=1}^n \int_0^t \left( \E\left[\abs{\hat X^{i,n}_s-Y^{i,n}_s}^2\right] +  \E \left[d_W(\mu^n_s,\hat\mu_s)^2 \right]\right) ds\,.
\end{align*}
Then, it holds that
\begin{equation}\label{eq:m-m}
\begin{aligned}
\E \left[d_W(\mu^n_t,\hat\mu_t)^2 \right]+ \frac{1}{n}\sum_{i=1}^n \E\left[ \abs{\hat X^{i,n}_t-Y^{i,n}_t} ^2\right] &\\
&\hspace{-5.5cm}\le 2 \E\left[ d_W(\mu^n_t, \mu^{Y,n}_t)^2 \right]  + 2 \E\left[ d_W (\hat\mu_t, \mu^{Y,n}_t)^2 \right] + \frac{1}{n}\sum_{i=1}^n \E\left[ \abs{\hat X^{i,n}_t-Y^{i,n}_t} ^2\right]\\
&\hspace{-5.5cm}\le 2 \E\left[ d_W (\hat \mu_t, \mu^{Y,n}_t)^2 \right]  + 2 \widetilde C\int_0^t \left( \E \left[d_W(\mu^n_s,\hat\mu_s)^2 \right]+ \frac{1}{n}\sum_{i=1}^n \E\left[ \abs{\hat X^{i,n}_s-Y^{i,n}_s} ^2\right]\right) ds\,.
\end{aligned}
\end{equation}
Therefore, by equation \eqref{eq:mYc} and Remark~\ref{r:unifmY}, we have 
\begin{equation*}\label{eq:m-m2}
\begin{aligned}
\E \left[d_W(\mu^n_t,\hat\mu_t)^2 \right] + \frac{1}{n}\sum_{i=1}^n \E\left[ \abs{\hat X^{i,n}_t-Y^{i,n}_t} ^2\right] &\\
&\hspace{-5.5cm}\le O \left( n^{-\alpha} \right) + 2 \widetilde C\int_0^t \left(\E \left[d_W(\mu^n_s,\hat\mu_s)^2 \right] + \frac{1}{n}\sum_{i=1}^n \E\left[ \abs{\hat X^{i,n}_s-Y^{i,n}_s} ^2\right] \right) ds ,
\end{aligned}
\end{equation*}
which implies equations~\eqref{eq:mX-mY=0} and \eqref{eq:X-Y=0}.
\end{proof}

In the previous estimates, we have considered the case when all the $n$ players are choosing the same strategy $\hat\gamma$. We now investigate what happen to the players' dynamics when Player 1 deviates from the  strategy profile $\hat\gamma^{\hat X^n}$ by playing some other strategy $\eta\in\G$. Note that in this case the dynamics of each player in $G_n$ is given by the solution to \eqref{eq:XiN} under the strategy $\eta^{\hat \gamma}$, i.e. $\widetilde X^{i,n}$.

\begin{proposition}\label{prop:mtmN}
Let $\hat X$ and $\widetilde X$ be the solutions of the system \eqref{eq:XiN}, when the strategy profile is given by $\hat\gamma^{\hat X^n}$ and by $\eta^{\hat\gamma}$, respectively. We denote by $\mu^n$ and $\widetilde \mu^n$ the empirical distribution of the two systems. Then, 
\[ 
\sup_{t\in[0,T]} \E \left[ d_W (\mu^n_t,\widetilde \mu^n_t)^2 \right]=  O \left( n^{-1} \right)\,.
\]
Moreover, considering $\widetilde Y^{1,n}$ defined in equation \eqref{def:tY}, it holds that
\begin{equation}\label{eq:Xt-Yt}
 \sup_{t\in[0,T], \,\eta\in \G}\E\left[\abs{\widetilde X_t^{1,n}-\widetilde Y^{1,n}_t}^2\right] = O \left( n^{-\alpha} \right)\,.
\end{equation}
\end{proposition}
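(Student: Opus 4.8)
The plan is to establish the two bounds separately, each time reducing the problem to Gronwall's lemma and then invoking Lemma~\ref{lemma:m2b} and Proposition~\ref{prop:mNmh}. Throughout, all constants will be kept independent of $\eta\in\G$, which is possible because the only quantitative inputs used ($\norm{\beta}_\infty$, $\norm{\lambda}_\infty$, the Lipschitz constants, the uniform second-moment bound of Lemma~\ref{lemma:m2b}) are themselves uniform in the controls.

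\emph{First estimate.} Since $\hat X^n$ and $\widetilde X^n$ are driven by the same Brownian motions and Poisson processes and differ only through Player~1's control, I would start from $d_W(\mu^n_t,\widetilde\mu^n_t)^2\le\frac1n\sum_{i=1}^n\abs{\hat X^{i,n}_t-\widetilde X^{i,n}_t}^2$. For $i\ge 2$ the two players follow the same feedback form $\hat\gamma$, so the computation of Proposition~\ref{prop:mNmh} (Jensen, BDG, Lipschitz continuity of $b,\sigma,\beta,\hat\gamma$) gives
\[
\E\bigl[\abs{\hat X^{i,n}_t-\widetilde X^{i,n}_t}^2\bigr]\le\widetilde C\int_0^t\Bigl(\E\bigl[\abs{\hat X^{i,n}_s-\widetilde X^{i,n}_s}^2\bigr]+\E\bigl[d_W(\mu^n_s,\widetilde\mu^n_s)^2\bigr]\Bigr)\,ds .
\]
For $i=1$ the same holds except that the jump increment $\beta(\mu^n_{s-},\hat\gamma(s,\hat X^{1,n}_{s-}))-\beta(\widetilde\mu^n_{s-},\eta_s)$ cannot be controlled by the Lipschitz property (the functions $\hat\gamma$ and $\eta$ are unrelated); instead it is bounded by $2\norm{\beta}_\infty\le 2M$, contributing a non-vanishing constant term $C_1=C_1(T,M)$. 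Setting $\phi(t):=\frac1n\sum_{i=1}^n\E[\abs{\hat X^{i,n}_t-\widetilde X^{i,n}_t}^2]$ and using $\E[d_W(\mu^n_s,\widetilde\mu^n_s)^2]\le\phi(s)$, the inhomogeneous constant $C_1$ enters $\phi$ divided by $n$, so $\phi(t)\le C_1/n+\widetilde C\int_0^t\phi(s)\,ds$; Gronwall then gives $\phi(t)\le (C_1/n)e^{\widetilde C T}$, whence $\sup_{t\in[0,T]}\E[d_W(\mu^n_t,\widetilde\mu^n_t)^2]=O(n^{-1})$, uniformly in $\eta\in\G$.

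\emph{Second estimate.} Now $\widetilde X^{1,n}$ (built with $\widetilde\mu^n$) and $\widetilde Y^{1,n}$ (built with $\hat\mu$) are driven by the same $W^1,\widetilde N^1$ and use the same control $\eta$, so their difference is forced only by $d_W(\widetilde\mu^n_s,\hat\mu_s)$. Proceeding as before (Jensen, BDG, Lipschitz continuity of $b,\sigma,\beta$) yields
\[
\E\bigl[\abs{\widetilde X^{1,n}_t-\widetilde Y^{1,n}_t}^2\bigr]\le\widetilde C\int_0^t\Bigl(\E\bigl[\abs{\widetilde X^{1,n}_s-\widetilde Y^{1,n}_s}^2\bigr]+\E\bigl[d_W(\widetilde\mu^n_s,\hat\mu_s)^2\bigr]\Bigr)\,ds ,
\]
and after Gronwall everything reduces to bounding $\int_0^t\E[d_W(\widetilde\mu^n_s,\hat\mu_s)^2]\,ds$. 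Splitting $d_W(\widetilde\mu^n_s,\hat\mu_s)^2\le 2\,d_W(\widetilde\mu^n_s,\mu^n_s)^2+2\,d_W(\mu^n_s,\hat\mu_s)^2$, the first term is $O(n^{-1})$ by the first estimate and the second is $O(n^{-\alpha})$ by \eqref{eq:mX-mY=0}; since $\alpha\le\tfrac{1}{2}$, the second dominates and $\sup_{t\in[0,T],\,\eta\in\G}\E[\abs{\widetilde X^{1,n}_t-\widetilde Y^{1,n}_t}^2]=O(n^{-\alpha})$.

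\emph{Main obstacle.} The delicate point is the first estimate: one must exploit that the deviating agent is a single player out of $n$. The discrepancy $\abs{\hat X^{1,n}-\widetilde X^{1,n}}$ does \emph{not} vanish — only boundedness of $\beta$ keeps it $O(1)$ — but it enters the empirical measure with weight $1/n$, so that $d_W(\mu^n,\widetilde\mu^n)$ is only of order $n^{-1/2}$; encoding this correctly in the Gronwall loop (by averaging before applying Gronwall, rather than after) is the step that requires care. The rest is a routine repetition of the a-priori estimates already carried out in Lemma~\ref{lemma:m2b} and Proposition~\ref{prop:mNmh}.
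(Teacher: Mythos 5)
Your proof is correct and follows essentially the same route as the paper: the deviating player's discrepancy is bounded crudely by a constant (you via $\norm{\beta}_\infty$, the paper via the Lipschitz property in the control together with $A_\infty$ and the a priori moment bounds of Lemmas~\ref{lemma:supXiN}--\ref{lemma:m2b}) and enters the averaged Gronwall estimate only with weight $1/n$, while the non-deviating players are treated exactly as in Proposition~\ref{prop:mNmh}. Your second step, splitting $d_W(\widetilde\mu^n_s,\hat\mu_s)$ by the triangle inequality into an $O(n^{-1})$ piece from the first estimate and an $O(n^{-\alpha})$ piece from \eqref{eq:mX-mY=0} before applying Gronwall, is just a slightly more explicit rendition of the paper's own argument and yields the same uniform-in-$\eta$ rate.
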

\begin{proof}
Firstly, we consider Player 1. By Lemma~\ref{lemma:supXiN}, the Lipschitz condition on $b$, $\sigma$ and $\beta$, and the boundedness of function $\lambda$ given by Assumption~\ref{main-ass}(2) imply
\begin{align*}
\E\left[\abs{\hat X_t^{1,n}-\widetilde X^{1,n}_t}^2\right] &\le  3 t L^2\int_0^t \E\left[\abs{\hat X_s^{1,n}-\widetilde X^{1,n}_s}^2\right]+ \E \left[ d_W (\mu^n_s,\widetilde \mu^n_s)^2 \right]ds 
+ 3 L^2\int_0^t\E\left[\abs{\hat X_s^{1,n}-\widetilde X^{1,n}_s}^2\right]ds\\
&\hspace{1cm}	
+3\E\left[\int_0^t \left( L \, d_W (\mu^n_s,\widetilde \mu^n_s) +L \abs{\hat\gamma(s,\hat X^{1,n}_s)-\eta}     \right)^2\lambda(s)\,ds \right]
\\
%
%& \le 12 L^2 \hat C(2 t^2+ t + \norm{\lambda}_\infty t) +12 A^2_\infty  \norm{\lambda}_\infty t \\
&\le 12 L^2 \hat C T(2 T +   2M+1)+12 L^2 A^2_\infty M   T  =: C_1 ,
\end{align*}
where the constant $\hat C$, given as in equation \eqref{def:tC}, is independent of $n$ and then so is $C_1$. Furthermore, by definition $C_1$ does not depend on $\eta$ either.

On the other hand, the other players for $i=2,\dots,n$ play the strategy $\hat\gamma(t,X^{i,n}_{t-})$ in both cases, then, to find an estimate for $\E [ | \hat X_t^{i,n}-\widetilde X^{i,n}_t |^2 ]$ we can argue as in \eqref{eq:X-Y}. Finally, following the same idea as to obtain \eqref{eq:m-m}, we have that
\begin{align*}
\E \left[ d_W (\mu^n_t,\widetilde \mu^n_t)^2 \right] &\le\frac{1}{n}\E\left[\abs{\hat X^{1,n}_t-\widetilde X^{1,n}_t}^2\right]+\frac{1}{n}\sum_{i=2}^n\E\left[\abs{\hat X^{i,n}_t-\widetilde X^{i,n}_t}^2\right]\\
&\le \frac{C_1}{n}+\frac{\widetilde C}{n}\sum_{i=2}^n \int_0^t \left( \E\left[\abs{\hat X^{1,n}_s-\widetilde X^{1,n}_s}^2\right] + \E \left[ d_W (\mu^n_s,\widetilde \mu^n_s)^2 \right]  \right) \,ds
\end{align*}
and therefore
\begin{multline*}
\E \left[ d_W (\mu^n_t,\widetilde \mu^n_t)^2 \right] +\frac{1}{n}\sum_{i=2}^n\E\left[\abs{\hat X^{i,n}_t-\widetilde X^{i,n}_t}^2\right]\\
\le \frac{C_1}{n}+\frac{2 \widetilde C}{n}\sum_{i=2}^n \int_0^t \left( \E\left[\abs{\hat X^{1,n}_s-\widetilde X^{1,n}_s}^2\right] +   \E \left[ d_W (\mu^n_s,\widetilde \mu^n_s)^2 \right]  \right) \,ds\,.
\end{multline*}
Finally, applying again Gronwall's lemma, it is found that
\begin{equation}\label{eq:K1}
 \E \left[ d_W (\mu^n_t,\widetilde \mu^n_t)^2 \right] +\frac{1}{n}\sum_{i=2}^n \E\left[\abs{\hat X^{i,n}_t-\widetilde X^{i,n}_t}^2\right]
\le \frac{1}{n} K_1(T,L,A_\infty, C_1)\,,
\end{equation}
with $K_1$ independent of $n$, $t$ and $\eta$. Therefore 
\[\sup_{t\in[0,T]\,\eta\in\G}  \E \left[ d_W (\mu^n_t,\widetilde \mu^n_t)^2 \right] =   O \left( n^{-1} \right) \,.\]
Lastly, arguing as in the proof of Proposition~\ref{prop:mNmh}, by considering $\widetilde Y^{1,n}$ as defined in \eqref{def:tY} we have
\begin{equation*}\label{eq:tK}
\begin{aligned}\E\left[\abs{\widetilde X_t^{1,n}-\widetilde Y^{1,n}_t}^2\right] &\le
3 (2t +1)L^2 \int_0^t \E  \left[\abs{\hat X^{i,n}_s -Y^{i,n}_s}^2\right]\,ds  \\
&\hspace{2cm}+ 3 L^2(2t+  \norm{\lambda}_\infty) \int_0^t  \E \left[ d_W (\mu^n_s,\hat \mu_s)^2 \right] \,ds
\\& \le \widetilde K(T,L, M)\int_0^t \left( \E\left[\abs{\widetilde X_s^{1,n}-\widetilde Y^{1,n}_s}^2\right] +  \E \left[ d_W (\mu^n_s,\hat \mu_s)^2 \right] \right) \,ds ,
\end{aligned}
\end{equation*}
so that 
\[
\E\left[\abs{\widetilde X_t^{1,n}-\widetilde Y^{1,n}_t}^2\right] \le  \widetilde K(T,L,M)\int_0^t \E\left[\abs{\widetilde X_s^{1,n}-\widetilde Y^{1,n}_s}^2\right]\,ds +\widetilde K(T,L,M)  O \left( n^{-\alpha} \right)\,.
\]
Hence Gronwall's lemma implies
\begin{equation}\label{eq:Xt-Yt2}
\E\left[\abs{\widetilde X_t^{1,n}-\widetilde Y^{1,n}_t}^2\right] \le \bar K(T, L,M) \, O \left( n^{-\alpha} \right) ,
\end{equation}
for a suitable constant $\bar K$ independent of $n$ and $t$, and therefore
\begin{equation*}
\sup_{t\in[0,T],\,\eta\in \G}\E\left[\abs{\widetilde X_t^{1,n}-\widetilde Y^{1,n}_t}^2\right] =  O \left( n^{-\alpha} \right)\,.
\end{equation*}
\end{proof}

\begin{remark}
It is crucial here and in what follows that the constants $K_1$ and $\bar K$ appearing in \eqref{eq:K1} and \eqref{eq:Xt-Yt2} do not depend on how Player 1 deviates from the  strategy profile $\hat\gamma^{\hat X^n}$.
\end{remark}

In order to prove Theorem~\ref{thm:eNeM}, we will make use of the following two operators: $\widetilde J_n \colon \G^n\to \R$ and $\widetilde J \colon \G\to \R$, defined by
\begin{equation}\label{def:tJN}
\widetilde  J_n (\gamma)=\E\left[\int_0^T f(t,X^{1,n}_t(\gamma),\hat \mu_t,\gamma^1_t)\,dt+g(X^{1,n}_T(\gamma), \hat \mu_T)\right]
\end{equation}
and
\begin{equation}\label{def:tJ}
\widetilde J (\eta) = \E\left[\int_0^T f(t,Y^{1,n}_t,\hat \mu_t,\eta_t)\,dt+g(Y^{1,n}_T, \hat \mu_T)\right]
\end{equation}
respectively, where $X^{1,n}(\gamma)$ and $Y^{1,n}(\eta)$ are given as in \eqref{eq:XiN} and \eqref{def:tY}.
It is worth observing that $\widetilde J$ does not depend on the number of players in the game $n$. Indeed, $Y^{1,n}$ follows the dynamics of a representative player in the mean-field game  $G_\infty$, and therefore, $\widetilde J$ is exactly the expected cost of the strategy $\eta$ in $G_\infty$ with respect the flow of measures $\hat \mu$, as given in equation~\eqref{def:Jinf}. Therefore, since $\hat\gamma(t,Y^{1,n}_t)$ is by construction one of the minimizing strategies, i.e. $\hat\gamma\in\arg\min_{\gamma\in\G} J(\gamma)$, we have that 
\begin{equation}\label{eq:opt}
\widetilde J (\hat \gamma(t,Y^{1,n}_{t-})) \le \widetilde J(\eta)\quad\text{for all $\eta \in \G$ and $t\in [0,T]$}\,.
\end{equation}

As first step, we show that the value of Player $1$ in the game $G_n$, when he/she deviates from the candidate Nash equilibrium $\hat\gamma^n$ to a different admissible strategy $\eta\in\G$, that is $J^{1,n}(\eta^{\hat\gamma})$ given in equation \eqref{def:JiN}, can be approximated (when $n$ is large) with $\widetilde J_n (\eta^{\hat\gamma})$, that is the expected cost computed under the same strategy profile $\eta^{\hat \gamma}$, but evaluated with respect to the mean measure $\hat m$.
\begin{proposition}\label{prop:app1}
Let $(t,x)\mapsto \hat\gamma(t,x)$ be as in Assumption \ref{markov-lip}. Consider the strategy profile 
\[ \hat\gamma^{\hat X^n}_t =(\hat\gamma(t,\hat X^{1,n}_{t-}),\dots,\hat\gamma(t,\hat X^{n,n}_{t-})), \quad t \in [0,T],\] and let $\eta$ be an admissible strategy in $\G$. Then
\begin{equation}\label{eq:app1}
\sup_{\eta\in\G} \abs{J^{1,n}(\eta^{\hat\gamma})-\widetilde  J_n(\eta^{\hat\gamma})} = O \left( n^{-\frac{\alpha}{2}} \right)\,.
\end{equation}
\end{proposition}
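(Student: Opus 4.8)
The plan is to bound the difference between the two cost functionals by estimating separately the discrepancy in the running costs and the discrepancy in the terminal cost, using the Lipschitz continuity of $f$ and $g$ from Assumption~\ref{main-ass}(3). Recall that $J^{1,n}(\eta^{\hat\gamma})$ is computed along $\widetilde X^{1,n}$ (the state of Player~1 in $G_n$ when Player~1 deviates to $\eta$) with the true empirical measure $\widetilde\mu^n$, while $\widetilde J_n(\eta^{\hat\gamma})$ is computed along the same dynamics $\widetilde X^{1,n}$ but with the mean-field measure $\hat\mu$ in the cost slots. Wait --- one must be careful: in $\widetilde J_n$ the definition uses $X^{1,n}(\gamma)$, the solution of \eqref{eq:XiN} under the strategy profile, which under $\eta^{\hat\gamma}$ is precisely $\widetilde X^{1,n}$; so in fact the state trajectories are identical and only the measure argument in $f$ and $g$ differs between $J^{1,n}(\eta^{\hat\gamma})$ and $\widetilde J_n(\eta^{\hat\gamma})$. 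This is the key simplification.

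Concretely, I would write
\[
\abs{J^{1,n}(\eta^{\hat\gamma})-\widetilde J_n(\eta^{\hat\gamma})} \le \E\!\left[\int_0^T \abs{f(t,\widetilde X^{1,n}_t,\widetilde\mu^n_t,\eta_t)-f(t,\widetilde X^{1,n}_t,\hat\mu_t,\eta_t)}\,dt\right] + \E\!\left[\abs{g(\widetilde X^{1,n}_T,\widetilde\mu^n_T)-g(\widetilde X^{1,n}_T,\hat\mu_T)}\right],
\]
and then apply the Lipschitz bounds to get this is at most
\[
L\,\E\!\left[\int_0^T d_W(\widetilde\mu^n_t,\hat\mu_t)\,dt\right] + L\,\E\!\left[d_W(\widetilde\mu^n_T,\hat\mu_T)\right] \le L(T+1)\sup_{t\in[0,T]}\E\!\left[d_W(\widetilde\mu^n_t,\hat\mu_t)\right].
\]
By the triangle inequality for $d_W$ and Jensen's inequality, $\E[d_W(\widetilde\mu^n_t,\hat\mu_t)] \le \E[d_W(\widetilde\mu^n_t,\mu^n_t)] + \E[d_W(\mu^n_t,\hat\mu_t)] \le \big(\E[d_W(\widetilde\mu^n_t,\mu^n_t)^2]\big)^{1/2} + \big(\E[d_W(\mu^n_t,\hat\mu_t)^2]\big)^{1/2}$. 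Now Proposition~\ref{prop:mtmN} gives $\sup_t \E[d_W(\mu^n_t,\widetilde\mu^n_t)^2] = O(n^{-1})$ and Proposition~\ref{prop:mNmh} gives $\sup_t \E[d_W(\mu^n_t,\hat\mu_t)^2] = O(n^{-\alpha})$. Since $\alpha \le 1/2 < 1$, the dominant term is $O(n^{-\alpha/2})$, and crucially all the constants involved (notably $K_1$ in \eqref{eq:K1}) are uniform in $\eta\in\G$, so the bound holds after taking the supremum over $\eta$. This yields \eqref{eq:app1}.

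The only genuine subtlety --- and the step I would flag as requiring care rather than being routine --- is making sure the two functionals really are compared along the \emph{same} state process, so that no term involving $\E[\abs{\widetilde X^{1,n}_t - (\text{something})}^2]$ enters here; the state-comparison estimates \eqref{eq:Xt-Yt} are not needed for this particular proposition (they will be needed later, when $\widetilde J_n$ is compared to $\widetilde J$). One should also double-check the uniformity in $\eta$: this hinges on the remark following Proposition~\ref{prop:mtmN} that $K_1$ does not depend on Player~1's deviation, which is exactly what lets us pass the supremum over $\eta$ inside the $O(n^{-\alpha/2})$ estimate. Everything else is a direct application of the Lipschitz assumptions and Hölder/Jensen.
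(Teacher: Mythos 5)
Your proposal is correct and follows essentially the same route as the paper: both exploit that $J^{1,n}(\eta^{\hat\gamma})$ and $\widetilde J_n(\eta^{\hat\gamma})$ are evaluated along the same trajectory $\widetilde X^{1,n}$ so that only the measure argument differs, then apply the Lipschitz continuity of $f$ and $g$, Jensen's inequality, and the estimates of Propositions~\ref{prop:mNmh} and \ref{prop:mtmN} (combined via the triangle inequality for $d_W$), with uniformity in $\eta$ coming from the $\eta$-independence of the constants. Your explicit spelling out of the triangle-inequality step and of the uniformity in $\eta$ only makes precise what the paper leaves implicit.
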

\begin{proof}
By definition \eqref{def:tJN}-\eqref{def:tJ} and Assumption~\ref{main-ass}(3), the distance between the two operators $J^{1,n}$ and $\widetilde  J_n$ can be bounded as follows.
\begin{align*}
\abs{J^{1,n}(\eta^{\hat\gamma})-\widetilde  J_n(\eta^{\hat\gamma})}&\le \E\left[ \int_0^T\abs{f(t,\widetilde X^{1,n}_t,\widetilde \mu^n_t,\eta_t)-f(t,\widetilde X^{1,n}_t,\hat \mu_t,\eta_t)}\,dt\right] \\
&\hspace{2cm}+\E\left[\abs{g(\widetilde X^{1,n}_T,\widetilde \mu^n_T)-g(\widetilde X^{1,n}_T,\hat \mu_T)}\right]\\
& \le L\int_0^T\E\left[d_W(\widetilde \mu^n _t, \hat \mu_t)\right] \,dt+L\E\left[d_W(\widetilde \mu^n_T,\hat \mu_T)\right]\,.
\end{align*}
Hence equation~\eqref{eq:app1} follows from previous results in Proposition~\ref{prop:mNmh} and Proposition~\ref{prop:mtmN}, since
\[
\E\left[d_W(\widetilde \mu^n _t, \hat \mu_t)\right]\le\left(\E\left[d_W(\widetilde \mu^n _t, \hat \mu_t)^2 \right]\right)^\frac{1}{2}=O \left( n^{-\frac{\alpha}{2}}\right)\,.
\]
\end{proof}

As second step, we approximate $\widetilde J_n(\eta^{\hat \gamma})$ with $\widetilde J(\eta)$, that is the expected cost for playing $\eta$ in the mean-field game $G_\infty$.
\begin{proposition}\label{prop:app2}
Let $(t,x)\mapsto \hat\gamma(t,x)$ represent the Markovian structure of a mean-field game solution of the game $G_\infty$, $\hat\gamma^{\hat X^n}_t =(\hat\gamma(t,\hat X^{1,n}_{t-}),\dots,\hat\gamma(t,\hat X^{n,n}_{t-}))$, for $t \in [0,T]$, and let $\eta\in\G$ be an admissible strategy. Then
\begin{equation}\label{eq:app2}
\sup_{\eta\in\G}\abs{\widetilde J_n (\eta^{\hat\gamma})-\widetilde J(\eta)} = O \left( n^{-\frac{\alpha}{2}} \right)\,.
\end{equation}
\end{proposition}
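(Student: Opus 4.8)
The plan is to estimate $\abs{\widetilde J_n(\eta^{\hat\gamma}) - \widetilde J(\eta)}$ directly, in exactly the same spirit as the proof of Proposition~\ref{prop:app1}. The crucial observation is that in the two functionals \eqref{def:tJN} and \eqref{def:tJ} the measure argument is the \emph{same} mean-field flow $\hat\mu$ and the control argument is the \emph{same} process $\eta$; the only discrepancy lies in the state process. In $\widetilde J_n(\eta^{\hat\gamma})$ it is $\widetilde X^{1,n}$, the state of Player~1 in $G_n$ solving \eqref{eq:XiN} under the deviation profile $\eta^{\hat\gamma}$, whereas in $\widetilde J(\eta)$ it is the representative-player dynamics $\widetilde Y^{1,n}$ of \eqref{def:tY}, driven by $\eta$ with respect to $\hat\mu$ and not depending on $n$.

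First I would write, using the definitions \eqref{def:tJN}--\eqref{def:tJ} and the triangle inequality,
\[
\abs{\widetilde J_n(\eta^{\hat\gamma}) - \widetilde J(\eta)} \le \E\left[\int_0^T \abs{f(t,\widetilde X^{1,n}_t,\hat\mu_t,\eta_t) - f(t,\widetilde Y^{1,n}_t,\hat\mu_t,\eta_t)}\,dt\right] + \E\left[\abs{g(\widetilde X^{1,n}_T,\hat\mu_T) - g(\widetilde Y^{1,n}_T,\hat\mu_T)}\right].
\]
Since the measure and control arguments coincide, the Lipschitz continuity of $f$ and $g$ in the state variable $x$ from Assumption~\ref{main-ass}(3) bounds the right-hand side by
\[
L\int_0^T \E\left[\abs{\widetilde X^{1,n}_t - \widetilde Y^{1,n}_t}\right]\,dt + L\,\E\left[\abs{\widetilde X^{1,n}_T - \widetilde Y^{1,n}_T}\right].
\]

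Next I would pass to second moments via Jensen's inequality, $\E[\abs{\widetilde X^{1,n}_t - \widetilde Y^{1,n}_t}] \le (\E[\abs{\widetilde X^{1,n}_t - \widetilde Y^{1,n}_t}^2])^{1/2}$, and invoke estimate \eqref{eq:Xt-Yt} of Proposition~\ref{prop:mtmN}, which gives $\sup_{t\in[0,T],\,\eta\in\G}\E[\abs{\widetilde X^{1,n}_t - \widetilde Y^{1,n}_t}^2] = O(n^{-\alpha})$. Both the time integral and the terminal term are then $O(n^{-\alpha/2})$, uniformly in $\eta\in\G$, so taking the supremum over $\eta$ yields \eqref{eq:app2}.

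I do not expect any genuine obstacle here: the argument is a short computation once the right processes are identified. The only point deserving care is that the bound borrowed from Proposition~\ref{prop:mtmN} must be uniform in the deviating strategy $\eta$ — but this is precisely the content of \eqref{eq:Xt-Yt} together with the Remark following that proposition, so no further work is needed. As in Proposition~\ref{prop:app1}, it is also worth recalling that $\widetilde J$ does not depend on $n$, so that \eqref{eq:app2} is a bona fide convergence statement and the left-hand side of the triangle inequality above is the object one ultimately wants to control.
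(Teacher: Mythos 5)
Your proposal is correct and follows essentially the same route as the paper's own proof: the triangle inequality plus the Lipschitz continuity of $f$ and $g$ in the state variable, followed by the uniform-in-$\eta$ estimate \eqref{eq:Xt-Yt} from Proposition~\ref{prop:mtmN} (the paper passes from the $L^2$ bound to the $L^1$ bound implicitly, where you spell out the Jensen step). No gaps; nothing further is needed.
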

\begin{proof}
Arguing as in Proposition~\ref{prop:app1}, we have that
\begin{align*}
\abs{\widetilde J_n (\eta^{\hat\gamma})-\widetilde J(\eta)}&\le\E\left[\int_0^T\abs{f(t, \widetilde X^{1,n}_t,\hat \mu_t,\eta_t)-f(t,\widetilde Y^{1,n}_t,\hat \mu_t,\eta_t)}\,dt\right] \\
&\hspace{2cm}+\E\left[\abs{g(\widetilde X^{1,n}_T,\hat \mu_T)-g(\widetilde Y^{1,n}_T,\hat \mu_T)}\right]\\
&\le L \int_0^T\E\left[\abs{\widetilde X^{1,n}_t -\widetilde Y^{1,n}_t}\right] \,dt+L\E\left[\abs{\widetilde X^{1,n}_T-\widetilde Y^{1,n}_T}\right]\,.
\end{align*}
Since by Proposition~\ref{prop:mtmN}, $\E\left[ \abs{ \widetilde X^{1,n}_t(\eta^{\hat\gamma})- \widetilde Y^{1,n}_t(\eta)}\right]= O \left( n^{-\frac{\alpha}{2}} \right)$, then
\begin{equation}\label{eq:app2}
 \sup_{\eta\in\G} \abs{\widetilde J_n (\eta^{\hat\gamma})-\widetilde J(\eta)} = O \left( n^{-\frac{\alpha}{2}} \right),
\end{equation}
as claimed.
\end{proof}

\begin{proof}[Proof of Theorem~\ref{thm:eNeM}]
\begin{comment}
Let $\varepsilon^1_n=4\sup_{\eta\in\G}\abs{J^{1,n}(\eta^{\hat\gamma})-\widetilde  J_n(\eta^{\hat\gamma})}$, $\varepsilon^2_n =4\sup_{\eta\in\G}\abs{\widetilde J_n (\eta^{\hat\gamma})-\widetilde J(\eta)}$ and $\varepsilon_n =\varepsilon^1_n+\varepsilon^2_n$
then
\begin{equation*}
J^{1,n}(\eta^{\hat\gamma})\ge -\frac{1}{2}\varepsilon_n + \widetilde J(\eta)\ge -\frac{1}{2}\varepsilon_n + \widetilde J(\hat\gamma)\ge -\varepsilon_n + J^{1,n}(\hat\gamma)
\end{equation*}
that is \eqref{eq:eNe} for player 1. The convergence of $\varepsilon_n$ to $0$ is guaranteed by the previous approximations.
\end{comment}
Given an admissible strategy $\eta \in \G$, let 
\[ \varepsilon^1_n =4 \sup_{\eta\in\G}\abs{J^{1,n}(\eta^{\hat\gamma})-\widetilde  J_n (\eta^{\hat\gamma})}, \quad \varepsilon^2_n =4\sup_{\eta\in\G}\abs{\widetilde J_n (\eta^{\hat\gamma})-\widetilde J(\eta)}, \quad \varepsilon_n =\varepsilon^1_n +\varepsilon^2_n.\]
Then
\begin{equation*}
J^{1,n}(\eta^{\hat\gamma})\ge -\frac{\varepsilon_n }{2}+ \widetilde J(\eta)\ge -\frac{\varepsilon_n}{2} + \widetilde J(\hat\gamma)\ge -\varepsilon_n + J^{1,n}(\hat\gamma),
\end{equation*}
which gives \eqref{eq:eNe} for Player 1. More in detail, the first and the third inequalities are guaranteed by Proposition~\ref{prop:app1} and Proposition~\ref{prop:app2} respectively, whereas the second inequality is justified in equation~\eqref{eq:opt}. The symmetry of the game $G_n$ guarantees that $(\hat\gamma(t,X^{1,n}_{t-}),\dots,\hat\gamma(t,X^{n,n}_{t-}))$, for $t \in [0,T]$, is an $\varepsilon$-Nash equilibrium of it.

The rate of convergence, i.e. $\varepsilon_n=  O \left( n^{-\frac{\alpha}{2}} \right)$, is also granted by the previous approximations in Propositions~\ref{prop:app1} and \ref{prop:app2}.
\end{proof}

\section*{Acknowledgements}
Professors L. Campi and L. Di Persio would like to thank the FBK (Fondazione Bruno Kessler) - CIRM (Centro Internazionale per la Ricerca Matematica) for having funded their ``Research in Pairs'' project \emph{McKean-Vlasov dynamics with L\'evy noise with applications to systemic risk},
during the period from November 1-8, 2015 , to which Chiara Benazzoli has actively participated.\\
Prof. Luca Di Persio and Chiara Benazzoli would like to thank the Gruppo Nazionale per l'Analisi Matematica, la Probabilità e le loro Applicazioni (GNAMPA) for having funded the projects \emph{Stochastic Partial Differential Equations and Stochastic Optimal Transport with Applications to Mathematical Finance}, coordinated by L. Di Persio (2016), and \emph{Metodi di controllo ottimo stocastico per l’analisi di problem debt management}, coordinated by Dr. A. Marigonda (2017), since both of which have contributed to sustain the development of the present paper.

\printbibliography
\end{document}